\newcommand\mn[1]{
}
\newcommand\omn[1]{
}
\newtheorem{Theorem}{Theorem} 
\newtheorem{Proposition}{Proposition}
\theoremstyle{definition}
\theoremstyle{remark} 
\newtheorem{Remark}{Remark}
 \newcommand{\mcond} 
{\mbox{\hspace{.4ex}\rule[-0.3ex]{.2ex}{2ex}\hspace{.45ex}}} 
\begin{document}

\title{Signals Featuring Harmonics with Random Frequencies - Spectral, Distributional and  Ergodic  Properties}
\author[A. Baxevani]{Anastassia Baxevani}
\address{Department of Mathematics and Statistics\\
University of Cyprus\\
PO Box 20537\\
1678, Nicosia, Cyprus }
 	\email{baxevani@ucy.ac.cy}
\author[K. Podg\'orski]{Krzysztof Podg\'orski}
\address{Centre for Mathematical Sciences\\
Mathematical Statistics\\
Lund University\\
Box 118\\
SE-22100 Lund, Sweden}
\email{krys@maths.lth.se}

 	\date{\today}
\maketitle

\begin{abstract}
It has been observed that an interesting class of non-Gaussian stationary processes is obtained when  in the harmonics of a signal with random amplitudes and phases, frequencies can  also vary randomly. In the resulting models, the statistical distribution of  frequencies determines the process spectrum while the distribution of amplitudes governs the process distributional properties. Since  decoupling the distributional and spectral properties can be advantageous in applications, we thoroughly investigate a variety of properties exhibited by these models. 

We extend  previous work  that represented processes  as finite sum of harmonics, by conveniently embedding them into the class of harmonizable processes. Harmonics are integrated with respect to  independently scattered second order non-Gaussian random measures. The proposed approach provides with a proper mathematical framework that allows to study spectral, distributional, and ergodic properties. The mathematical elegance of these  representations avoids serious conceptual and technical difficulties with limiting behavior of the models while at the same time facilitates derivation of their fundamental properties. 

In particular, the  multivariate distributions are obtained  and   the asymptotic behavior of time averages is formally derived through the strong ergodic theorem.  Several deficiencies following from the previous approaches are resolved and some of the results  appearing in the literature are corrected and extended.  It is shown that due to the lack of ergodicity processes exhibit an interesting property of non-trivial randomness remaining in the limit of time averages. This feature maybe utilized to modelling signals observed in the presence of influential and variable random factors. 

The case of a stationary process with double exponential (Laplace) distribution that is important in many signal processing applications is discussed in full detail. Finally explicit representations, that can be utilized for efficient simulation and numerical studies of these processes, are obtained.
\end{abstract}


\section{Introduction}
The problem of defining a second-order stochastic process with a given spectrum and a given marginal distribution is of practical importance especially in control theory, signal processing and other engineering applications. A usual approach for obtaining realizations with the desired spectral and distributional properties is by using   linear time-invariant filters to white noise and then  nonlinear transformations to the filter's output, \cite{bib:Grigoriu_book}. The difficulty in the method arises from the interconnection between  the spectral density and the marginal distribution, see for example \cite{bib:BaxevaniP}. Changing the one changes the other, and the relation that links them together is not always simple or tractable. 

\cite{bib:Kay} solves this problem by proposing a model in which the distributional and spectral properties are decoupled.  The model extends the classical spectral representation by allowing frequencies to be random. More specifically he considers
\begin{equation}
\label{eq:sinusoidal}
X(t)=\frac{1}{\sqrt{m/2}}\sum_{i=1}^{m}\xi_{i}\cos(\lambda_i t +\phi_i),
\end{equation}
where the sequences of  independent and identically distributed (iid) random variables $(\xi_i)$, $(\lambda_i)$ and $(\phi_i)$ are mutually independent.  In  model (\ref{eq:sinusoidal}), the  phases $\phi_i$ need to be  uniformly distributed over any interval of length $2\pi$ to ensure the  stationarity of the  process.  On the other hand,  the distributions of   frequencies $\lambda_i$ and amplitudes $\xi_i$ can be chosen to control the spectral and marginal distribution of the signal $X$, respectively.

The role of random frequencies in controlling spectral densities has  been noticed already in earlier works, see \cite{bib:Priestley} and \cite{bib:Anderson}. However, these authors  considered   processes  consisting of a single harmonic and as such they were non-ergodic and hence of little use in practise. Summing a large number of harmonics  may to some extent alleviate this problem. For example,  \cite{bib:Kay} showed the  process (\ref{eq:sinusoidal})  to be  `nearly' ergodic in the autocorrelation for large $m$. Specifically he showed   the variance of the sample autocorrelation at time $t$ to converge as the observation time increases without bound, to
\begin{equation}
\label{mom}
\frac{\mathbb{E}[\xi^4]}{m}+\frac{\mathbb{E}[\xi^4]r_X(2t)}{m \mathbb{E}[\xi^2]}-\frac{r^2_X(t)}{m},
\end{equation}
where $\xi$ has the distribution of $\xi_i$'s and $r_X$ is the autocorrelation of process $X$.  
It is tempting to conclude  ergodicity in the autocorrelation by letting $m$ go to infinity. However,  this would  lead to  conceptual inconsistency since process $X$  depends on $m$ and formally speaking there is no process corresponding to this limiting case of $m=\infty$.
In fact as shown in \cite{bib:Kay}, if the number of harmonics $m$ in (\ref{eq:sinusoidal})  is fixed and the distribution of amplitudes $\xi_i$ is such that it does not depend on  $m$, the   distribution of the process $X$ must depend on $m$. Similarly choosing the distribution of the process $X$ independently of $m$, which is more natural in applications, gives harmonic amplitudes with distributions that depend clearly on  $m$. In either case the limiting behavior of the distributions depending on $m$, when $m$  increases without bound, is non-trivial and  requires special care when formal arguments are presented.  We overcome these difficulties  by proposing a certain natural extension to model (\ref{eq:sinusoidal}) that allows  treating imbedding it within the class of harmonizable processes and thus avoiding the above difficulties following from dealing with  finite sums with a specific number of harmonics.

The ergodic behavior of a process is one of the important properties  for which correct analysis under large $m$ is critical. 
In \cite{bib:Kay}, ergodicity in the autocorellation is studied. Under this assumption,  sample estimators of the mean and the covariance function are consistent and therefore  converge in probability to the estimated values. It happens that the argument goes through (\ref{mom}) only under the assumption of amplitudes  $\xi_i$ having distribution that does not depend on $m$, which is obviously not true when  the distribution of $X$ is independent of  $m$. We demonstrate that in most observed cases  the autocorrelation estimator is not consistent, with its variance  that converges to a  random limit rather than to zero (which would grant the consistency).
By  using the framework of harmonizable processes, we apply Birkoff's ergodic theorem to obtain convergence with probability one for time averages of any functional of the process.  It is important to realize that the strong consistency does not hold, as was already shown in \cite{bib:Wright} for the case of sample variance.     Specifically, it was shown   that the sample variance converges with probability one  to a random quantity that equals the sum of squares of the jumps of the sample process. Thus, it was demonstrated that  signals consisting of harmonics with random frequencies are never ergodic according to the  mathematical definition of ergodicity.

Moreover, we show that the observed ergodicity for large $m$ for the case when distribution of  the amplitudes of the harmonics not depending on $m$ can be interpreted as the convergence of the underlying process $X$ to a Gaussian process. We observe a trade-off here, either we want the process not to be gaussian but then one needs to accept non-ergodicity or if one demands ergodicity we return to the Gaussian domain.


To summarize, in this work, we propose the proper mathematical framework so that all the spectral, distributional and ergodic  properties of signals of the form in (\ref{eq:sinusoidal})  can be treated in a unified manner and with mathematical rigor. At the same time, the aforementioned  processes obtain a simple and interpretable stochastic representation that can be utilized for simulation purposes  and numerical studies. Moreover, the separation between the spectral density and the marginal distribution is preserved, with the former being controlled by the distribution of the frequencies and the latter by the distribution of the amplitudes.

\section{Signal harmonics with random frequencies}
\label{sec:prob_framework}
We provide a proper mathematical framework for representing signals made of harmonics featuring random frequencies. 
Following \cite{bib:Kay}, who extended the investigation of such signals  beyond the single harmonic case that was treated earlier in \cite{bib:Anderson,bib:Priestley}, let us consider
\begin{equation}
\label{eq:sinusoidal_sum}
X_{m}(t)=\sum_{i=1}^{m}\xi_{i,m}\cos(\lambda_it+\phi_i).
\end{equation}
All three sequences $(\xi_{i,m})$, $(\lambda_i)$, and $(\phi_i)$ are considered to be random, the first two made of positive variables while the $\phi_i$'s are uniformly distributed over $[0,2\pi)$, and $m$ is, at least for now, a fixed positive integer. 
Often one is interested in considering a large number of frequencies  $m$  and studying the convergence of time averages over  large periods  $T$, which in mathematical terms means studying asymptotic properties of the process with both $m$ and $T$ converging to infinity. 
However, the order of passing to the  limits with $m$ and $T$ should be treated with care as it may produce apparent formal inconsistencies, as pointed out in the introduction.
Thus studying the asymptotic characteristics of the signal in the  form of the series in  (\ref{eq:sinusoidal_sum}),  is not necessarily the best option since it may lead to some misinterpretation of the resulting properties. However there is an elegant and simple framework that alleviates these shortcomings by representing such signals as the so-called harmonizable processes, which have been introduced in  \cite{bib:Loeve} and their integral representation properties have been investigated in \cite{bib:Loeve, bib:Rosanov} amongst others. 
In a sense, harmonizable processes represent the limiting form of $X_m$  when the number of harmonics $m$ increases without bound. 
In mathematical literature these processes are conveniently described through L\'evy processes, see  \cite{bib:Sato, bib:Kyprianou} and references therein.   Next we give a brief review of the latter and a simple intuitive construction of a  harmonizable process in which the finite sum (\ref{eq:sinusoidal_sum}) is naturally embedded. 

The processes we consider have the form 
\begin{equation}
    \label{process_series}
    X(t)=\sum_{i=1}^{\infty}\xi_i \cos(\lambda_it+\phi_i),~t\in \mathbb R,
    \end{equation}
where $(\lambda_i)$ and $(\phi_i)$ are as before, i.e. they are two independent sequences of independent and identically distributed (i.i.d) random variables, the first defined on the half-line with some distribution function (cdf) $F_\lambda$ while the second  are uniformly distributed over $[0,2\pi)$. 
The sequence of amplitudes $(\xi_i)$ is still independent of the phases and the frequencies. 
However, deviating from (\ref{eq:sinusoidal_sum}), the non-negative amplitudes $(\xi_i)$ are no longer i.i.d but  have the form
$$
\xi_i=
\sigma_0\sqrt{\Lambda^{-1}(\Gamma_i)}R_i,
$$
where  $\sigma_0>0$ is non-random, $\Gamma_i$ are arrivals of the standard Poisson process, and  $(R_i)$ has i.i.d elements  distributed according to the standard Rayleigh distribution, i.e. according to density $re^{-r^2/2}$, and independent of everything else. Measure $\Lambda$ is defined   on $[0,\infty)$, and is the L\'evy measure   satisfying
\begin{equation}
\label{Levy}
\Lambda(\{0\})=0, \quad \int_{0}^\infty u^2~\Lambda(du) <\infty,
\end{equation}
 \cite{bib:Sato, bib:Kyprianou},
The generalized inverse $\Lambda^{-1}$  of the tail of $\Lambda$ is defined by
\begin{equation}
\label{Levy_inverse}
\Lambda^{-1}(u)=\inf\{x>0 :\Lambda\left[x,\infty\right)<u\}, u>0.
\end{equation}
For normalization purposes, that will be clear in the next section,  additionally to (\ref{Levy}), we assume
\begin{equation}
\label{Levy_restriction}
\int_{1}^\infty u~\Lambda(du)=1.
\end{equation}

We also note that for the variance mixtures of a standard Gaussian vector
$$
\mathbf W_i=(W_{1i},W_{2i})=\sqrt{
\Lambda^{-1}(\Gamma_i)}( Z_{1i},Z_{2i}),
$$ 
with $Z_{1i}$ and $Z_{2i}$  mutually independent standard normal, that are also independent of $\Gamma_i$, we have
$$
\xi_i=\sigma_0 \|\mathbf W_i\|=\sigma_0\sqrt{W_{1i}^2+W^2_{2i}}. 
$$

As we will see in the next section,   $\Lambda$ denotes the L\'evy measure of an infinitely divisible distribution with finite second moment defined  on the positive half-line. 
We shall also argue for the correctness of the infinite series in (\ref{process_series}) using its spectral representation in the form of a second order harmonizable process. 
We also point out that process $X$ is uniquely defined, up to a scale, by $(\sigma_0, F_\lambda, \Lambda)$, with the process spectrum   defined by $F_\lambda$ and $\Lambda$  responsible for the distribution of $X$.

\subsection{The finite sum with independent amplitudes}
\label{fsia}
Now, we demonstrate that the finite sums (\ref{eq:sinusoidal_sum}) embed in our model. 
Consider the case of a L\'evy measure $\Lambda$ with  support separated from zero, i.e.   for some fixed $u_0 >0$, 
\begin{equation*}
\label{eq:truncated_measure}
\Lambda[0,u_0]=0,
\end{equation*}
and let $u_*$ be the supremum over such $u_0$'s. 
It is easy to see that non-increasing $u\mapsto \Lambda[u,\infty)$ is constant and equal to $L=\Lambda[u_*,\infty)$ for $0< u\le u_*$.
Notice that $\Lambda^{-1}(\Gamma_i)=0$ for $\Gamma_i>L$ and for $\Gamma_i < L$, the generalized inverse $\Lambda^{-1}(\Gamma_i)$ is different than zero. Hence, there exists $ N : \Lambda^{-1}(\Gamma_{N})\neq 0$ and $\Lambda^{-1}(\Gamma_{N+1})=0$. 
It is easy to see that $N$ is a random variable depending on $L$ and Poisson distributed with  mean $L$, i.e. $N=N(L)$, where $N(t)$, $t>0$ is the standard Poisson process given by $\Gamma_i$, $i\in \mathbb N$. 
By truncating the series (\ref{process_series}), the infinite sum becomes a finite one with a random number of terms
\begin{equation}
\label{eq:series_Poisson}
X(t)=\sigma_0\sum_{i=1}^{N(L)}\sqrt{\Lambda^{-1}(\Gamma_i)}R_i \cos(\lambda_it+\phi_i).
\end{equation}
Conditioning on $N(L)=m$, the arrival times $\Gamma_i$, $i\le m$ of the Poisson process, are distributed as the order statistics of variables that are uniformly distributed on  $[0,L]$, yielding the following representation for the $X$ process
\begin{equation}
\label{eq:series_Poisson_2}
(X(t)\mcond N(L)=m)\stackrel{d}{=}\sum_{i=1}^{m}\xi_{i,L}\cos(\lambda_it+\phi_i),
\end{equation}
with i.i.d variables $\xi_{i,L}=\sigma_0\sqrt{\Lambda^{-1}(L{U_i})}R_i$, where ${U_i}$  uniformly distributed on $[0,1]$.
Here and in what follows $\stackrel{d}{=}$ stands for the equality in distribution. 
We observe that the above form is almost identical to (\ref{eq:sinusoidal_sum}) that was discussed in  \cite{bib:Kay}. 
In this sense, the processes discussed there are embedded in our model as the processes conditional on $N(L)=m$.
The limiting argument of $m$ increasing without bound that was used there in studying convergence of sample correlation can be now properly stated as the case when $L$, and thus $N(L)$, increases without bound. 

 More specifically, if we start with general $\Lambda$ that is not separated from zero, by considering a truncated measure
$$
\Lambda_{L}(A)=\Lambda\left(\mathbb I_{[u,\infty)}\cap A\right),
$$
where $u=\Lambda^{-1}(L)$, we obtain the approximation of $X(t)$ by $X_L(t)$ defined for $\Lambda_L$, i.e. by  the right hand side in (\ref{eq:series_Poisson}).
For $L$ increasing without bound $X_L$ clearly converges almost surely to $X$.
\subsection{Harmonizable Laplace process and Gamma L\'evy measure}
Although any   measure $\Lambda$  that satisfies  (\ref{Levy}) can be used in the presented construction, here we discuss in full detail the important case of the L\'evy measure characterizing the Gamma process. Recall that a stochastic process  $G(u)$ is called a Gamma process if it starts at zero, has independent and stationary increments, and the  increments  are distributed as   Gamma  with some scale and shape parameters. 
The corresponding L\'evy measure  $\Lambda$ is defined on $[0,\infty)$  in terms of the exponential integral function
$$
\Lambda\left([u,\infty)\right)=E_1(u)/\nu=
\frac{1}{\nu}\int_{u}^\infty \frac{e^{-x}}x ~dx,
$$
see also \cite{bib:Abramowitz}.
Thus   $\Lambda^{-1}(\Gamma_i)=E_1^{-1}(\nu \Gamma_i)$.
Since inverting the integral function can be computationally costly, it is better to utilize the shot noise series expansion of the gamma process,  given in  \cite{bib:Bondesson} and \cite{bib:Rosinski}:
\begin{equation}
\label{eq:Gamma_L}
G(u)= \nu\sum_{i=1}^{\infty}  V_i e^{-\nu \Gamma_i} \mathbb I_{[U_i,1]}(u), u\in [0,1] 
\end{equation}
where $V_i$ are i.i.d standard exponentially distributed random variables independent of both $\Gamma_i$ and $U_i$. Both sequences $\Gamma_i$ and $U_i$  are distributed as before. The increments $G(u+h)-G(u)$ have been chosen to follow the Gamma distribution with scale $\nu$ and shape $h/\nu $, so that $\mathbb{E}(G(1))=1$.

Using this shot noise expansion of   $G$, process $X(t)$ in (\ref{eq:series_Poisson}),  attains the following alternative and more explicit series representation
\begin{equation}
    \label{eq:Gamma_series2}
    X(t)\stackrel{d}{=}\sigma_0 \nu^{1/2} \sum_{i=1}^{\infty}  e^{-\nu\Gamma_i/2}V_i^{1/2} R_i\cos(\lambda_i t+\phi_i).
\end{equation} 
As it will be seen in section \ref{sec:distrib}, this  harmonizable process has  generalised Laplace distribution as marginal and for this reason will be referred to as \emph{harmonizable Laplace process}. 


Although the above series expression resembles the  representation given in (\ref{process_series}), the two series are not exactly the same since while $\sqrt{\Lambda^{-1}(\Gamma_i)}$'s in (\ref{process_series}) are non-increasing, in  (\ref{eq:Gamma_series2}) the non-increasing  $\exp(-\nu \Gamma_i)$ are multiplied by the i.i.d standard exponential variables $V_i$. 
However, the above explicit form allows for the following elegant representation when conditioned on the Poisson process corresponding to $\Gamma_i$'s. Let $N(L)$ as before denote  the random number of arrivals of the  Poisson process associated with $\Gamma_{i}$'s, where $\Gamma_{N(L)}\le L <\Gamma_{N(L)+1}$. Truncating the series to $N=N(L)$ and then conditioning on $N=m$ yields
\begin{equation}
    \label{eq:Gamma_series_trunc2}
    (X(t)\mcond N=m)\stackrel{d}{=}
    \sigma_0 \nu^{1/2} 
    \sum_{i=1}^{m}
    \left(
     \left(e^{-U_{i}}\right)^{L\nu }V_i
     \right)^{1/2} 
     R_i\cos(\lambda_i t+\phi_i)
    =
        \sum_{i=1}^{m}
 \xi_{i,L}\cos(\lambda_i t+\phi_i)
   \end{equation}
where $\xi_{i,L}= \sigma_0 \nu^{1/2}\left(
     \left(e^{-U_{i}}\right)^{L\nu }V_i
     \right)^{1/2} 
     R_i$ which yields yet another approximation of $X$ that has the form (\ref{eq:sinusoidal_sum}).  
 Similarly, as before by defining for $L>0$
 $$
 X_{L}(t){=}
    \sigma_0 \nu^{1/2} 
    \sum_{i=1}^{N(L)}
     e^{-\nu\Gamma_i/2}V_i^{1/2} R_i\cos(\lambda_i t+\phi_i),
 $$
 we obtain an almost sure approximation of $X$ by the finite sum  $X_{L}$ with Poisson number of terms. 
 We note that this approximation has different distribution of the i.i.d amplitudes than the one based on direct inversion of the L\'evy measure and thus illustrating the importance of the choice of asymptotic approach in the approximation of $X$. 
\section{Spectral Representation}
\label{spre}
In this section we  discuss how the  infinite sum of harmonics in (\ref{process_series}) can be represented as a harmonizable process 
\begin{equation}
\label{har}
X(t)=\int_{-\infty}^{\infty} e^{i\lambda t} d\zeta(\lambda),
\end{equation}
with $\zeta$ a  complex-valued stochastic measure,  symmetric about zero, i.e. $\zeta(-A)=\overline{\zeta(A)}$,  the overline indicates the complex conjugate, and independently scattered on the  real line. The terms spectral measure and spectral process for $\zeta$ will be used indistinguishably.  Harmonizable processes  have been introduced in \cite{bib:Loeve} and further studied in \cite{bib:CambanisHardinWerron, bib:Yaglom} and \cite{bib:Cramer} amongst others, as a first step generalization of weakly stationary mean-square continuous stochastic processes. While harmonizable processes  do not need to  satisfy the second-order property, we consider only this case originally treated in \cite{bib:Loeve}. So from  now on, the  stochastic measure $\zeta$ has finite second order moment.  Bochner's theorem for the covariance function of  $X$ yields  a spectral measure $F$ that controls the spectral process $\zeta$, which from  now on  will be denoted by $\zeta_F$. Without losing generality we assume that $F$ does not have an atom at zero. 

For the purposes of this paper we are only interested in symmetric around zero spectral distributions. Hence, $\zeta_F$  can be viewed as the zero concatenation of two processes defined for $\lambda>0$, that  have necessarily  the form
\begin{equation}
 \label{eq:spectral_meas}   
\begin{split}
&\zeta_F(0,\lambda]=\frac{\sqrt{2}}{2}\bigg(B(G(F(0,\lambda]))+i\tilde{B}(G(F(0,\lambda]))\bigg),\\
&\zeta_F[-\lambda,0)=\overline{\zeta_F(0,\lambda]},
\end{split} 
\end{equation}
where $B$ and $\tilde{B}$ are two independent standard Brownian motions. $G: [0,\infty)\mapsto [0, \infty)$ is a non-decreasing pure jump L\'evy process   with  $\mathbb{E}(G(1))=1$, which is exactly property (\ref{Levy_restriction}),  and independent of both $B$ and $\tilde{B}$. Finally $F: \mathbb R  \mapsto [0,F(\mathbb R)]$  is the  spectral distribution function controlling the spectral process $\zeta_F$. 

The  \emph{Inverse L\'evy Measure Method} \cite{bib:Rosinski}, yields  the following  series representation for the process $G$ with $ u\in [0,F(0,\infty)]$ :
\begin{equation}
    \label{LevyPr}
    G(u)=F[0,\infty)\sum_{i=1}^{\infty} \Lambda^{-1}(\Gamma_i)~ \mathbb I_{\displaystyle [U_i,1]}(u/ F(0,\infty)),\\ 
\end{equation}
where   $U_i$  are  i.i.d  random variables distributed as uniform  on  $[0,1]$, and $\Gamma_i$ are  the arrival times  of a standard Poisson process. Sequences ($U_i$)  and  $(\Gamma_i)$ are mutually independent.  $\Lambda$ denotes the  L\'evy measure of the infinitely divisible distribution of the random variable $Y=G(F(0,\infty))/F(0,\infty)$ that  satisfies (\ref{Levy}) and (\ref{Levy_restriction})  and its generalized inverse is defined as in (\ref{Levy_inverse}).
The explicit connection between $G$ and $\Lambda$ is through the characteristic function of $Y$ that is given in terms of the  L\'evy-Khinchine representation
$$
\phi_Y(t)=\exp\left(\int_0^\infty \big(e^{ixt}-1-it x\mathbb I_{(0,1)}(x)\big) ~ d\Lambda(x)\right).
$$
It is easy to see, using (\ref{Levy_restriction}) that $\phi_Y'(t)|_{t=0}=i$ so that $\mathbb{E}(Y)=-i\cdot \phi_Y'(0)=1$. Then, from the definition of the random variable $Y$, it is straightforward to deduce that  $\mathbb{E}(G(1))=1$. 

\begin{Proposition}
\label{prop:spectral}
The stochastic process $X(t)$ defined in (\ref{process_series}) has the spectral representation
\begin{equation}
    \label{process_real}
    X(t)\stackrel{d}{=} \int_{-\infty}^{\infty} e^{i\lambda t}~d\zeta_F(\lambda).
\end{equation}
with spectral process $\zeta_F$ defined as in (\ref{eq:spectral_meas} ) with the L\'evy process $G$ in (\ref{LevyPr})  defined so that $\Lambda$ is the L\'evy measure of $Y=G(F(0,\infty))/F(0,\infty)$, and 
$$
F(0,\lambda]=\frac{\sigma_0^2}{2} F_\lambda(\lambda), ~\lambda\ge 0.
$$ 
\end{Proposition}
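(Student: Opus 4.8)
The plan is to substitute the inverse L\'evy measure series (\ref{LevyPr}) for $G$ into the right-hand side of (\ref{process_real}), pass from the spectral process to its jumps, and check that what remains is, term by term, the harmonic series (\ref{process_series}). First I would use that $F$ is symmetric with no atom at zero, together with the conjugate symmetry $\zeta_F(-A)=\overline{\zeta_F(A)}$, to reduce the integral in (\ref{process_real}) to $X(t)=2\,\mathrm{Re}\int_0^\infty e^{i\lambda t}\,d\zeta_F(\lambda)$, and then expand the real part via (\ref{eq:spectral_meas}) to obtain
\[
X(t)=\sqrt2\left(\int_0^\infty\cos(\lambda t)\,dB(G(F(0,\lambda]))-\int_0^\infty\sin(\lambda t)\,d\tilde B(G(F(0,\lambda]))\right).
\]

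Next I would insert (\ref{LevyPr}). The process $\lambda\mapsto G(F(0,\lambda])$ is then pure jump, with a jump of size $\Delta_i:=F(0,\infty)\Lambda^{-1}(\Gamma_i)$ at the location $\lambda_i$ solving $F(0,\lambda_i]=F(0,\infty)U_i$; since $(U_i)$ is i.i.d.\ uniform on $[0,1]$ and independent of $(\Gamma_i)$, and we are to take $F(0,\lambda]=\tfrac{\sigma_0^2}{2}F_\lambda(\lambda)$ (so $F(0,\infty)=\tfrac{\sigma_0^2}{2}$), the $\lambda_i$ come out i.i.d.\ with cdf $F_\lambda$ and independent of $(\Gamma_i)$, exactly as in (\ref{process_series}). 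Because $B$ and $\tilde B$ are independent of $G$, conditionally on the jump structure the increments of $B$, respectively $\tilde B$, across the $i$-th jump of $G\circ F$ are independent $N(0,\Delta_i)$, so $B(G(F(0,\lambda]))=\sum_{i:\lambda_i\le\lambda}\sqrt{\Delta_i}\,Z_{1i}$ and $\tilde B(G(F(0,\lambda]))=\sum_{i:\lambda_i\le\lambda}\sqrt{\Delta_i}\,Z_{2i}$ with $(Z_{1i})$, $(Z_{2i})$ two independent i.i.d.\ standard normal sequences, independent of everything else. Replacing the two stochastic integrals by the corresponding jump sums and collecting constants, $\sqrt2\,\sqrt{\Delta_i}=\sigma_0\sqrt{\Lambda^{-1}(\Gamma_i)}$, yields
\[
X(t)=\sigma_0\sum_{i=1}^\infty\sqrt{\Lambda^{-1}(\Gamma_i)}\bigl(Z_{1i}\cos(\lambda_i t)-Z_{2i}\sin(\lambda_i t)\bigr).
\]

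Finally I would pass $(Z_{1i},Z_{2i})$ to polar coordinates $(R_i\cos\phi_i,R_i\sin\phi_i)$: then $R_i=\sqrt{Z_{1i}^2+Z_{2i}^2}$ is standard Rayleigh with density $re^{-r^2/2}$, $\phi_i$ is uniform on $[0,2\pi)$, the two are mutually independent and independent of everything else, and $Z_{1i}\cos(\lambda_i t)-Z_{2i}\sin(\lambda_i t)=R_i\cos(\lambda_i t+\phi_i)$. Recalling $\xi_i=\sigma_0\sqrt{\Lambda^{-1}(\Gamma_i)}R_i$, this is precisely (\ref{process_series}), and since all the joint laws of the ingredients match, the two processes share their finite-dimensional distributions, which is the assertion. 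I expect the main obstacle to be the rigorous justification of replacing the stochastic integrals against the Gaussian process subordinated to the pure-jump subordinator $G$ by the explicit jump sums, and of the a.s.\ convergence of the series involved; both should follow from (\ref{Levy}) through $\E[G(1)]=1<\infty$, which forces $\sum_i\Lambda^{-1}(\Gamma_i)<\infty$ a.s., combined with a conditional Kolmogorov three-series argument for the harmonic sum. It is worth remarking that merely matching the covariance functions of the two sides via Bochner's theorem gives only the second-order identity rather than the distributional one, so the explicit series computation above is genuinely needed.
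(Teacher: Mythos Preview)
Your proposal is correct and follows essentially the same route as the paper's own proof: both substitute the inverse L\'evy measure series (\ref{LevyPr}) into the spectral process, identify the jump locations of $G\circ F$ as $\lambda_i=F_\lambda^{-1}(U_i)$ with jump sizes $F(0,\infty)\Lambda^{-1}(\Gamma_i)$, replace the Brownian increments across jumps by i.i.d.\ Gaussians scaled by $\sqrt{\Delta_i}$, and then pass $(Z_{1i},Z_{2i})$ to polar coordinates $(R_i,\phi_i)$. Your version is somewhat more explicit about the reduction from the two-sided integral to the real half-line form and about the convergence issues, but the argument is the same.
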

\begin{proof}
Replacing $u$  in (\ref{LevyPr}) by $F(0,\lambda]$ and using the facts that $F(0,\infty)=\frac{\sigma_0^2}{2}$ and  $F_{\lambda}$ is a cumulative distribution function, we obtain
\begin{equation}
\label{eq:G_series}
G(F(0,\lambda])=F(0,\infty)\sum_{i=1}^{\infty} \Lambda^{-1}(\Gamma_i) \mathbb I_{[F_\lambda^{-1}(U_i),\infty]}(\lambda).
\end{equation}

Embedding the  series expansion of  $G$, given  in (\ref{eq:G_series}), in  the spectral process $\zeta_F$ in   (\ref{eq:spectral_meas}) and denoting by $\sigma_0^2/2$ the $F(0,\infty)$,  we obtain the following expression for the process $X$ defined in (\ref{process_real})
\begin{eqnarray}
    \label{process_series_new}
    X(t)&\stackrel{d}{=}\sigma_0\sum_{i=1}^{\infty}\sqrt{\Lambda^{-1}(\Gamma_i)}\cdot
     \left( Z_i^{(1)}\cos(F_\lambda^{-1}(U_i)t)- Z_i^{(2)}\sin(F_\lambda^{-1}(U_i)t)\right)\\ \nonumber
    &\stackrel{d}{=}\sigma_0\sum_{i=1}^{\infty}\sqrt{\Lambda^{-1}(\Gamma_i)}R_i \cos(F_\lambda^{-1}(U_i)t+\phi_i)\\ \nonumber
   & \stackrel{d}{=}\sigma_0\sum_{i=1}^{\infty}\sqrt{\Lambda^{-1}(\Gamma_i)}R_i \cos(\lambda_it+\phi_i) \nonumber
\end{eqnarray}
where $(Z_i^{(j)}), j=1,2$, are sequences of independent standard normal random variables. Moreover,  $(R_i)$ and $(\phi_i)$ are   sequences of independent standard Rayleigh and uniform on $[0,2\pi)$  random variables respectively. $R_i$ and $\phi_i$ are also jointly independent. The first equivalence in (\ref{process_series_new}) follows   by noticing  that the increments of a Brownian motion are stationary, independent and distributed as normal random variables with mean zero and variance $\sigma_0^2\Lambda^{-1}(\Gamma_i)$. The second equivalence follows from $\left(Z_i^{(1)},Z_i^{(2)}\right)\stackrel{d}{=}R_i \left(\cos \phi_i, \sin \phi_i\right)$. Finally $(\lambda_i)$ is a sequence of random variables distributed according to the distribution $F_\lambda$. 
\end{proof}
\begin{Remark}
 Notice that the series representation of $X(t)$ in (\ref{process_series_new}) decomposes the process into a series of real harmonics with phases and amplitudes that are random and independent of each other but also, departing from the classical case, frequencies that are also random and distributed according  to $F_\lambda$. Hence this is a method that can be used to produce stochastic processes with given marginal density and given spectral distribution function.
\end{Remark}

\subsection{The discrete spectrum } 
We finish this section  by having a closer look to the case of discrete  frequency distribution $F_{\lambda}$. It turns out,  following from spectral representation in (\ref{process_real}), that in this case the randomness of the frequencies  is only apparent.  More precisely, let us consider a spectrum that us concentrated at the frequencies $\pm l_k, k=1,\dots$  with masses $\nu_k=F_{\lambda}(\{l_k\}), \forall k$. We  have used   the standard convention $-l_k=l_{-k}$. Then,   process $X$ in (\ref{process_real}) reduces to the following sum of harmonics with non-random frequencies 
\begin{equation*}
    \label{process_series_discrete}
    X(t)\stackrel{d}{=}\sum_{k=1}^{\infty}\sqrt{G_k}\cdot \left( Z_k^{(1)}\cos(l_kt)- Z_k^{(2)}\sin(l_k t)\right)\stackrel{d}{=}\sum_{k=1}^{\infty}\sqrt{G_k}R_k \cos(l_kt+\phi_k),
\end{equation*}
where $G_k$ are independent random variables  with distribution uniquely identified by the  L\'evy measure $\Lambda$ of the process $G$ through $\mathbb{E} (G_k)=\nu_k$. The variables $Z_k^{(1)}, Z_k^{(2)}, R_k$ and $\phi_k$ are distributed as before.
\section{Distributional Properties}
\label{sec:distrib}
In this section we study the distributional properties of the harmonizable process  $X$ in (\ref{process_real}). We start with a short discussion of the distribution of the different components in the series representation of the process before we turn to the marginal  distribution of the process  and the joint distribution of the process and its derivative. The latter is important when one is interested in the level crossing sampling distributions of the process which are expressed by means of the Rice formula, \cite{bib:Rice1944,bib:Rice1945,bib:AbergP}.

\subsection{Phase distribution} 
In the previous sections, we have considered processes  with  the random amplitudes   and  frequencies having  arbitrary distributions, but with the  phases  being uniformly distributed over  some interval of length $2\pi$. The latter is a restriction that needs to be imposed  for  the process $X$  in (\ref{process_real}) to be strictly stationary. Next, we show  correctness of this statement for the case of a  single harmonic. Extension of the  argument for any number of harmonics is rather straightforward. 

Consider the single harmonic
\begin{equation}
\label{eq:single_harmonic}
\xi\cos(\lambda t+\phi)
\end{equation}
and  condition on $\lambda=\lambda_0$ and $\xi=\xi_0$. Then  for each $n\in \mathbb N$, $a_i, t_i\in \mathbb R$, $i=1,\dots, n$, $s\in \mathbb R$ we have  
\begin{align*}
\sum_{i=1}^n a_i \xi_0 \cos(\lambda_0(t_i+s)+\phi)&=\sum_{i=1}^n a_i \xi_0 \cos(\lambda_0t_i+\phi+\lambda_0s)\\
&=\sum_{i=1}^n a_i \xi_0 \cos(\lambda_0t_i+\tilde \phi)\\
&\stackrel{d}{=}\sum_{i=1}^n a_i \xi_0 \cos(\lambda_0t_i+\phi).
\end{align*}
The last equality holds if phases $\tilde \phi = (\phi+\lambda_0s)_{\mod 2 \pi}$ have the same distribution as $\phi$, which is true if and only if the latter are  uniformly distributed over $(0,2\pi]$ and thus independent of $\lambda_0$ and, of course, of $\xi_0$.
This proves conditional and thus also unconditional stationarity in the strict sense for the single harmonic, independently of the distribution of $\lambda$ and $\xi$. 
\begin{figure}[!t]
 \centering
 \includegraphics[width=3.5in]{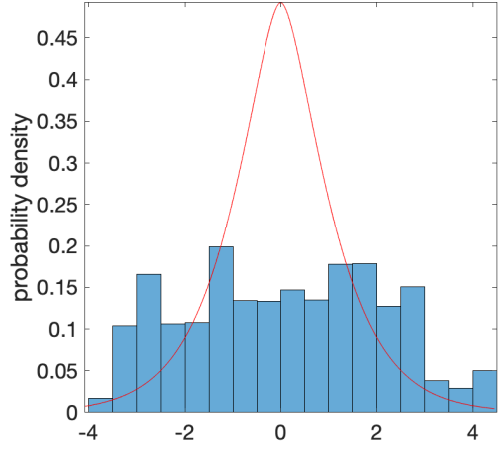}
 \caption {Theoretical density of $X(0)$ defined in (\ref{eq:Gamma_series2}) and histogram of $X(t)$ from one realisation based on 2000 simulated values  for $\nu=1.5$ which corresponds to a process with standard Laplace marginal distribution.  The frequencies follow a Uniform distribution on $[0,1]$ and the rest of the random variables are as indicated in the text.}
\label{fig:hist1}
\end{figure}

\begin{figure}[!t]
 \centering
 \includegraphics[width=3.5in]{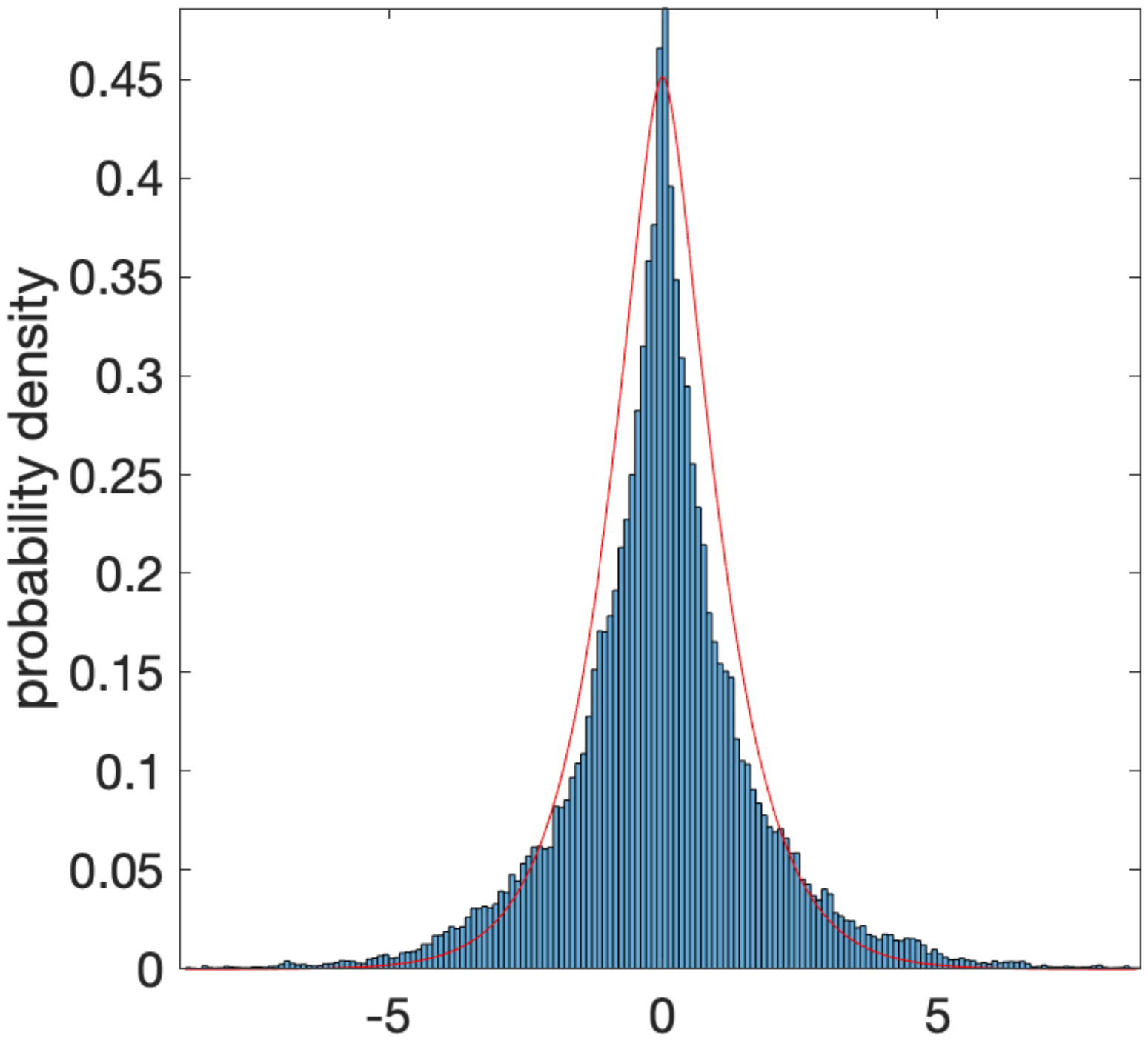}
 \caption {Theoretical density of $X(0)$ defined in (\ref{eq:Gamma_series2}) and histogram of $X(t)$  50 realisations of 2000 simulated values  for $\nu=1.5$ which corresponds to a process with standard Laplace marginal distribution.  The frequencies follow a Uniform distribution on $[0,1]$ and the rest of the random variables are as indicated in the text.}
\label{fig:hist2}
\end{figure}
For the record, we formulate this in the form of a theorem. 
\begin{Theorem}
A process of the form (\ref{eq:single_harmonic}) with  arbitrary amplitude and frequency distribution  is strictly stationary if and only if $\phi$ is uniform on $[0,2\pi)$. 
\end{Theorem}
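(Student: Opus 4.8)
The plan is to prove both directions of the equivalence by reducing strict stationarity to a condition on the distribution of the phase shift, exactly mirroring the computation already displayed in the text for the conditional case. First I would fix the amplitude and frequency by conditioning on $\lambda = \lambda_0$ and $\xi = \xi_0$, so that it suffices to analyze the deterministic-amplitude, deterministic-frequency harmonic $Y_{\lambda_0,\xi_0}(t) = \xi_0 \cos(\lambda_0 t + \phi)$; since stationarity of the finite-dimensional distributions of $X$ is equivalent to stationarity of the conditional laws for $F_\lambda \otimes F_\xi$-almost every $(\lambda_0,\xi_0)$ together with the independence of $\phi$ from $(\lambda,\xi)$, the full statement follows once the single-parameter case is settled for every $\lambda_0 \neq 0$.

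For the ``if'' direction I would simply reproduce the three-line chain of equalities in the excerpt: for any $n$, any $a_i, t_i \in \mathbb R$ and any shift $s$, write $\cos(\lambda_0(t_i+s)+\phi) = \cos(\lambda_0 t_i + \tilde\phi)$ with $\tilde\phi = (\phi + \lambda_0 s) \bmod 2\pi$, and observe that if $\phi$ is uniform on $[0,2\pi)$ then $\tilde\phi$ is again uniform on $[0,2\pi)$ (the uniform law on the circle is translation invariant), hence the vector $(Y(t_1+s),\dots,Y(t_n+s))$ has the same law as $(Y(t_1),\dots,Y(t_n))$. This gives equality of all finite-dimensional distributions, i.e. strict stationarity.

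For the ``only if'' direction, suppose the process $\xi\cos(\lambda t + \phi)$ is strictly stationary. Taking $n = 1$, $t_1 = 0$, $a_1 = 1$, stationarity forces $\xi\cos(\phi) \stackrel{d}{=} \xi\cos(\lambda s + \phi)$ for every $s$. The cleanest route is to pass to characteristic functions: for every $u \in \mathbb R$ and every $s$,
\begin{equation*}
\mathbb{E}\big[e^{i u \xi \cos(\phi)}\big] = \mathbb{E}\big[e^{i u \xi \cos(\lambda s + \phi)}\big].
\end{equation*}
More informatively, I would use two time points, or equivalently exploit that stationarity of the one-harmonic process is equivalent to the statement that the law of the pair $(\xi, \phi \bmod 2\pi)$ equals the law of $(\xi, (\phi + \lambda s) \bmod 2\pi)$ for every $s$ in the support of $\lambda$ — this is because $\xi$ and $(\lambda,\phi)$ together determine, and are determined by (on $\{\xi\neq 0\}$, modulo the $\pm$ ambiguity resolved by looking at $\cos$ and its shift by a quarter period), the sample path. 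Conditioning on $\xi = \xi_0 \neq 0$ and on $\lambda = \lambda_0 \neq 0$, one gets that the law of $\phi \bmod 2\pi$ is invariant under the rotation by $\lambda_0 s$ for all $s \in \mathbb R$, hence invariant under every rotation of the circle; the unique rotation-invariant probability measure on $[0,2\pi)$ is the uniform one, so $\phi$ is uniform and, since its conditional law given $(\lambda,\xi)$ does not depend on $(\lambda_0,\xi_0)$, it is independent of $(\lambda,\xi)$.

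The main obstacle, and the only place requiring care, is the ``only if'' direction when $\lambda$ or $\xi$ can vanish or when $\xi$ is not a.s. positive: if $\lambda_0 = 0$ the harmonic is constant and imposes no condition on $\phi$, and if $\xi_0 = 0$ the path is identically zero, so the argument only pins down the conditional law of $\phi$ on the event $\{\lambda \neq 0\}$. Under the paper's standing assumptions the frequencies are strictly positive (they have an absolutely continuous or at least non-degenerate cdf $F_\lambda$ on the half-line with no atom forced at $0$, cf. the remark that $F$ has no atom at zero), so $\{\lambda \neq 0\}$ has full probability and the conclusion is unconditional; I would state this hypothesis explicitly. A secondary subtlety is extracting rotation invariance for all angles from invariance under the countable-or-continuum family $\{\lambda_0 s : s \in \mathbb R\}$ — but since $\lambda_0 \neq 0$ this family is all of $\mathbb R$, so $\{\lambda_0 s \bmod 2\pi\}$ is the entire circle and no approximation argument is needed. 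Everything else is the routine characteristic-function / translation-invariance bookkeeping already sketched in the excerpt.
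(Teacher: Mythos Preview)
Your proposal is correct and follows essentially the same approach as the paper: condition on $(\lambda,\xi)=(\lambda_0,\xi_0)$, reduce strict stationarity to the requirement that $(\phi+\lambda_0 s)\bmod 2\pi \stackrel{d}{=} \phi$ for all $s$, and invoke the uniqueness of the rotation-invariant probability measure on the circle. You supply more detail for the ``only if'' direction and for the degenerate cases $\lambda_0=0$, $\xi_0=0$ than the paper does (it simply asserts the equivalence in one line before stating the theorem), but the underlying argument is identical.
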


\subsection{Marginal distribution}
The next result lists basic facts about the distribution of the process $X$.
\begin{Proposition}
\label{prop:distribution}
The harmonizable process $X(t)$ defined in (\ref{process_real}) has
\begin{enumerate}
    \item one dimensional marginal distribution of the $G$-type given  by 
    \begin{equation}
        \label{eq:1dim}
        X(t)\stackrel{d}{=}\sqrt{2\cdot G(F(0,\infty))}\cdot Z,
    \end{equation}
    where $Z$ is a standard normal variable while $G$ and $F$  are as in Proposition~\ref{prop:spectral},
    \item finite dimensional distributions defined by the characteristic function
\begin{eqnarray}
\label{eq:fdd}
&\mathbb{E}\bigg(\exp\big(i\sum_{j=1}^nu_jX(t_j)\big)\bigg)=\exp\bigg( \int_{0}^{\infty} \int_{0}^{\infty} 
\bigg(\exp\big(- x\frac{\mathbf{u}\mathbf A\mathbf{u}^T}{2}\big)-\\
&-1-x\frac{\mathbf{u}\mathbf A\mathbf{u}^T}{2}\mathbb I_{(0,1)}(x)\bigg)~d\Lambda(x)~dF(\lambda)\bigg) \nonumber
\end{eqnarray}
where $\mathbf u=(u_1,\dots,u_n)$, the transpose of $u$ is denoted by $u^T$  and the entries of the matrix $\mathbf A$ are $\cos(\lambda(t_j-t_k))$,
\item  characteristic function of $(X(0),X'(0))$ is given by
\begin{eqnarray*}
 \label{eq:dim_der}
 &\phi(u_1,u_2)= \int_0^\infty \int_0^\infty \exp\bigg(-x(u_1^2+u_2^2\lambda)/2-1-x(u_1^2+u_2^2\lambda)/2 \cdot \mathbb I_{(0,1)}(x)\bigg)~d\Lambda(x)~dF(\lambda),
\end{eqnarray*}
where the derivative of $X(t)$, which is assumed to exist in the mean-square sense,  is given by 
\begin{equation*}
X'(t)=-\int_{0}^\infty \lambda \sin(\lambda t)~dB(G(F(\lambda)))+\int_{0}^\infty \lambda \cos(\lambda t)~d\tilde{B}(G(F(\lambda))).
\end{equation*}

\end{enumerate}
\end{Proposition}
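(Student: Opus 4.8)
The plan is to condition on the Poisson marks $(\Gamma_i,U_i)$ entering the series representation (\ref{process_series_new}) — equivalently, on the subordinator $G$ — exploit that conditionally on these $X$ is a Gaussian process, and then integrate them out by the exponential formula, i.e.\ the L\'evy--Khinchine representation of $Y=G(F(0,\infty))/F(0,\infty)$ recalled above. For (1), fix $t$: by (\ref{process_series_new}), conditionally on $(\Gamma_i)$ the variable $X(t)=\sigma_0\sum_i\sqrt{\Lambda^{-1}(\Gamma_i)}\big(Z_i^{(1)}\cos(\lambda_i t)-Z_i^{(2)}\sin(\lambda_i t)\big)$ is a sum of independent centred Gaussians with total variance $\sigma_0^2\sum_i\Lambda^{-1}(\Gamma_i)\big(\cos^2(\lambda_i t)+\sin^2(\lambda_i t)\big)=\sigma_0^2\sum_i\Lambda^{-1}(\Gamma_i)$, which does not depend on $t$. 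Since $\sum_i\Lambda^{-1}(\Gamma_i)=G(F(0,\infty))/F(0,\infty)$ and $F(0,\infty)=\sigma_0^2/2$ by (\ref{eq:G_series}) and Proposition~\ref{prop:spectral}, this variance equals $2G(F(0,\infty))$, so conditionally on $G$ one has $X(t)\sim N\big(0,2G(F(0,\infty))\big)$, which is (\ref{eq:1dim}).

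For (2), the identity $\big(\sum_j u_j\cos\lambda t_j\big)^2+\big(\sum_j u_j\sin\lambda t_j\big)^2=\mathbf u\,\mathbf A(\lambda)\,\mathbf u^{T}$, with $\mathbf A(\lambda)=\big(\cos(\lambda(t_j-t_k))\big)_{j,k}$, shows that the same conditioning gives conditional characteristic function $\exp\big(-\tfrac{\sigma_0^2}{2}\sum_i\Lambda^{-1}(\Gamma_i)\,\mathbf u\,\mathbf A(\lambda_i)\,\mathbf u^{T}\big)$, $\lambda_i=F_\lambda^{-1}(U_i)$. By the inverse L\'evy measure method the point set $\{(\Lambda^{-1}(\Gamma_i),U_i)\}$ is a Poisson random measure on $(0,\infty)\times[0,1]$ with intensity $d\Lambda(x)\,du$; applying the exponential (Campbell) formula in the compensated form that reproduces the L\'evy--Khinchine exponent of $Y$, and then substituting $u=F_\lambda(\lambda)$ (turning $du$ into $dF_\lambda(\lambda)$, hence, up to the scale $\sigma_0$, into $dF(\lambda)$), the unconditional characteristic function becomes
\[
\E\exp\big(i\textstyle\sum_j u_jX(t_j)\big)=\exp\Big(\int_0^\infty\!\!\int_0^\infty\!\Big(\exp\big(-x\tfrac{\mathbf u\mathbf A(\lambda)\mathbf u^{T}}{2}\big)-1-x\tfrac{\mathbf u\mathbf A(\lambda)\mathbf u^{T}}{2}\,\mathbb I_{(0,1)}(x)\Big)\,d\Lambda(x)\,dF(\lambda)\Big),
\]
which is (\ref{eq:fdd}).

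Part (3) is the case $(X(0),X'(0))$ of the same computation. The derivative exists in mean square precisely when $\int_0^\infty\lambda^2\,dF(\lambda)<\infty$, and then (\ref{process_series_new}) may be differentiated term by term, giving the displayed expression for $X'$. Conditionally on $(\Gamma_i,U_i)$ the variables $X(0)=\sigma_0\sum_i\sqrt{\Lambda^{-1}(\Gamma_i)}Z_i^{(1)}$ and $X'(0)=-\sigma_0\sum_i\sqrt{\Lambda^{-1}(\Gamma_i)}\,\lambda_iZ_i^{(2)}$ are built from the two disjoint families $(Z_i^{(1)})$ and $(Z_i^{(2)})$, hence are conditionally independent centred Gaussians with variances $\sigma_0^2\sum_i\Lambda^{-1}(\Gamma_i)$ and $\sigma_0^2\sum_i\Lambda^{-1}(\Gamma_i)\lambda_i^2$; de-conditioning exactly as in (2), now with $u_1^2+u_2^2\lambda_i^2$ (where $\lambda_i=F_\lambda^{-1}(U_i)$) in place of $\mathbf u\mathbf A(\lambda_i)\mathbf u^{T}$, yields the stated characteristic function of $(X(0),X'(0))$.

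The main obstacle is the de-conditioning step: justifying the exponential formula for the Poisson functional $\sum_i\Lambda^{-1}(\Gamma_i)\,\varphi(U_i)$ and tracking the constants — the scale $\sigma_0$, the total mass $F(0,\infty)=\sigma_0^2/2$, and, above all, the appearance and cutoff level of the compensator $\mathbb I_{(0,1)}(x)$, inherited from the L\'evy--Khinchine exponent of $Y$ — together with the Fubini/measurability arguments needed to condition on $(\Gamma_i,U_i)$ inside the characteristic function and to interchange the double integral. The mean-square differentiability used in (3) is the only further technical point.
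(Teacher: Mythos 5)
Your proof is correct and follows essentially the same route as the paper: condition on the subordinator $G$ (equivalently on the Poisson marks), exploit conditional Gaussianity with variance $\int_0^\infty\big(f^2(\lambda)+\tilde f^2(\lambda)\big)\,dG_F(\lambda)$, and de-condition via the L\'evy--Khinchine exponential formula --- the paper merely phrases this through the stochastic integral $T=\sum_j u_jX(t_j)$ and the Laplace transform of that quadratic functional rather than through Campbell's formula for the marked Poisson point process. One small remark: your (correct) computation in part (3) produces $u_1^2+u_2^2\lambda^2$ in the exponent (from $\tilde f(\lambda)=u_2\lambda$), whereas the displayed formula in the statement prints $u_1^2+u_2^2\lambda$; that is a typo in the statement, not a flaw in your argument.
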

\begin{proof}

1) Relation (\ref{eq:1dim}) is immediate from noticing that since process $X(t)$ is stationary, it has the same marginal distribution as the random variable $X(0)$. To obtain the distribution of the latter notice that using the definition of the spectral measure in (\ref{eq:spectral_meas}), this is the same as the distribution of $\sqrt{2}B(G(F(0,\infty))$, which is  normal  with variance $2G(F(0,\infty))$.

2) Consider the random variable $T=\sum_{j=1}^nu_jX(t_j)$. Then 
$$
T=\int_{0}^{\infty} f(\lambda)~dB(G(F(\lambda)))+ \int_{0}^{\infty} \tilde{f}(\lambda)~d\tilde{B}(G(F(\lambda))),
$$
with 
$$
f(\lambda)=\sum_{j=1}^n u_j\cos(\lambda t_j), ~ \tilde{f}(\lambda)=\sum_{j=1}^n u_j\sin(\lambda t_j).
$$

Conditionally on $G$ we have
$$
T\mcond G\stackrel{d}{=}\int_{0}^{\infty} f(\lambda)~dB_{G\circ F}(\lambda)+ \int_{0}^{\infty} \tilde{f}(\lambda)~d\tilde{B}_{G\circ F}(\lambda).
$$
Since this  is the sum of two independent Gaussian random variables we get
\begin{equation}
    \label{eq:char_T_cond}
\mathbb{E} \big(e^{iuT}\mcond G\big)=\exp\bigg(-\frac{u^2}{2}\int_0^{\infty}\big(f^2(\lambda)+\tilde{f}^2(\lambda)\big)~dG_{F}(\lambda)\bigg),
\end{equation}
which we recognize as the Laplace transform evaluated at $u^2/2$ of the random variable  $V=\int_0^{\infty}\big(f^2(\lambda)+\tilde{f}^2(\lambda)\big)~d
G_{F}(\lambda)$.

Using  the well known form of characteristic function for a L\'evy random variable $G(1)$\cite{bib:Sato} with L\'evy measure $\Lambda$,
$$
\mathbb{E}(e^{iuG(1)})=\exp\bigg( \int_0^{\infty} \big(e^{i ux}-1-iu x \mathbb I_{(0,1)(x)}\big)~d\Lambda(x)\bigg)
$$

together with (\ref{eq:char_T_cond}), we obtain for the random variable $T$
\begin{eqnarray*}
\begin{aligned}
&\mathbb{E}\big(e^{iT}\big)= \mathbb{E}\bigg(\mathbb{E}\big(e^{iT}\mcond G\big)\bigg)=\\
&=\exp\bigg(\int_0^{\infty} \int_0^\infty \bigg(e^{-\frac{1}{2}(f^2(\lambda)+\tilde{f}^2(\lambda))x}- 1-\frac{1}{2}(f^2(\lambda)+\tilde{f}^2(\lambda))  x\mathbb I_{(0,1)}(x)\bigg)~d\Lambda(x)~dF(\lambda)\bigg).
\end{aligned}
\end{eqnarray*}
Now the proposition follows by observing that
$$
f^2(\lambda)+\tilde{f}^2(\lambda)=\sum_{j=1}^n\sum_{k=1}^nu_ju_k\cos(\lambda(t_j-t_k))=\mathbf u \mathbf A\mathbf u^T.
$$

3) The result is an immediate consequence of the second part if we  notice that 
\begin{equation*}
u_1X(0)+u_2X'(0)= \int_0^\infty u_1~dB(G(F(\lambda)))+\int_0^\infty u_2\lambda ~d\tilde{B}(G(F(\lambda))),
\end{equation*}
which is the random variable $T$ for $f(\lambda)=u_1$ and $\tilde{f}(\lambda)=u_2\lambda$. 
\end{proof}

\subsection{The harmonizable Laplace process} We conclude this section with the  special case of the harmonizable Laplace process given in (10).   In this case,  $L_F(\lambda):=B(G(F(\lambda))$ and $\tilde{L}_F(\lambda):=\tilde{B}(G(F(\lambda))$ with $G$ a Gamma process, are two Laplace processes. The distribution of $L$ and $\tilde{L}$  is an immediate consequence of the representation of the Laplace random variable as a Gaussian random variable subordinated to a gamma variable,    see \cite{bib:KotzKP}.  The resulting representation of the process in (10) is 
\begin{equation}
\label{eq:Laplace}
X(t)\stackrel{d}{=}\int_0^\infty\cos(\lambda t)~dL_F(\lambda)+\int_0^\infty\sin(\lambda t)~d\tilde{L}_F(\lambda).
\end{equation}

The marginal distributions of the process can be obtained in a similar way as in Proposition ~\ref{prop:distribution}, and are gathered in the next result.
\begin{Proposition}
\label{prop:Laplace}
The Laplace harmonizable process defined in (\ref{eq:Laplace}) has 
\begin{enumerate}
    \item marginal distribution defined by the characteristic function
    \begin{equation}
        \label{eq:laplace_marg}
        \phi_{X(t)}(u)=\bigg(1+ \nu u^2\bigg)^{- F(0,\infty)/\nu},
    \end{equation}
    \item finite dimensional distributions defined by the following characteristic function
    \begin{equation}
        \label{eq:laplace_fdd}
        \mathbb{E}\exp\big(i\sum_{j=1}^n u_jX(t_j)\big)=\exp \bigg(-\int_0^\infty \ln\big(1+ \nu\mathbf{u}\mathbf A\mathbf u^T \big)~dF(\lambda)\bigg),
    \end{equation}
    with $\mathbf u$ and $\mathbf A$ as in Proposition~\ref{prop:distribution}.
\end{enumerate}
\end{Proposition}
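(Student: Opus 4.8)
The plan is to read both identities off Proposition~\ref{prop:distribution} by specializing the L\'evy measure to that of the Gamma subordinator, namely $\Lambda(dx)=\nu^{-1}x^{-1}e^{-x/\nu}\,dx$ on $(0,\infty)$ (the measure with tail $\Lambda[u,\infty)=E_1(\cdot)/\nu$ from Section~\ref{sec:prob_framework}), for which $\int_0^1 x\,\Lambda(dx)<\infty$ and $G(F(0,\lambda])$ is Gamma distributed with shape $F(0,\lambda]/\nu$ and scale $\nu$. The single computation that does all the work is the Laplace exponent of this subordinator, obtained from a Frullani integral,
\[
\int_0^\infty\bigl(e^{-sx}-1\bigr)\,\Lambda(dx)=\frac1\nu\int_0^\infty\frac{e^{-(s+1/\nu)x}-e^{-x/\nu}}{x}\,dx=-\frac1\nu\ln(1+\nu s),\qquad s\ge 0,
\]
equivalently $\mathbb{E}\bigl(e^{-sG(1)}\bigr)=(1+\nu s)^{-1/\nu}$.

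For part~(1) I would use part~(1) of Proposition~\ref{prop:distribution}, which gives $X(t)\stackrel{d}{=}\sqrt{2\,G(F(0,\infty))}\,Z$ with $Z$ standard normal independent of $G$. Conditioning on $G$ yields $\mathbb{E}\bigl(e^{iuX(t)}\mid G\bigr)=e^{-u^2 G(F(0,\infty))}$, and averaging over $G$ with the Gamma Laplace transform at $s=u^2$ gives $\phi_{X(t)}(u)=(1+\nu u^2)^{-F(0,\infty)/\nu}$. The normal--variance--mixture representation of the generalized Laplace law recorded in \cite{bib:KotzKP} is exactly what identifies this marginal as (generalized) Laplace, justifying the name. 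One may equally obtain part~(1) as the case $n=1$ of part~(2), in which $\mathbf A=[1]$ and $\mathbf u\mathbf A\mathbf u^T=u^2$.

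For part~(2) I would take $T=\sum_{j=1}^n u_jX(t_j)$ and condition on the Gamma process $G$ of the representation (\ref{eq:Laplace}). As in the proof of part~(2) of Proposition~\ref{prop:distribution}, $T\mid G$ is a centered Gaussian whose conditional variance is a fixed multiple of $\int_0^\infty\kappa(\lambda)\,dG(F(0,\lambda])$, where $\kappa(\lambda)=\mathbf u\mathbf A\mathbf u^T=f^2(\lambda)+\tilde f^2(\lambda)\ge 0$ in the notation of that proof; this nonnegativity keeps the Laplace transforms and the logarithm below well behaved, and $\kappa$ is bounded, hence $F$-integrable. Thus $\mathbb{E}\bigl(e^{iT}\mid G\bigr)=\exp\bigl(-c\int_0^\infty\kappa(\lambda)\,dG(F(0,\lambda])\bigr)$ for a normalizing constant $c$, and averaging over $G$ after the change of variable $s=F(0,\lambda]$ recasts the right side as an integral of a deterministic function against the Gamma subordinator; the exponential formula together with the Laplace exponent displayed above then produces a characteristic function of the form $\exp\bigl(-c'\int_0^\infty\ln(1+c\nu\,\mathbf u\mathbf A\mathbf u^T)\,dF(\lambda)\bigr)$. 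The same expression also drops out of substituting $\Lambda(dx)=\nu^{-1}x^{-1}e^{-x/\nu}\,dx$ directly into the double integral of part~(2) of Proposition~\ref{prop:distribution}, the inner $x$-integral collapsing by the Frullani computation; the $\mathbb I_{(0,1)}$ compensator is inert here because $\int_0^1 x\,\Lambda(dx)<\infty$ and the Gamma subordinator carries no drift.

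The routine parts are the Frullani integral and the change of variable. The one step that demands care, and the main obstacle, is the normalization: tracking the factor $\sqrt 2$ in the spectral representation (\ref{eq:spectral_meas}) together with the role of the compensator, so that the constants $c$ and $c'$ inside and in front of the logarithm come out right --- most safely pinned down by matching the case $n=1$ to part~(1).
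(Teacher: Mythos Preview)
Your approach is essentially the paper's own: part~(1) by specializing Proposition~\ref{prop:distribution}(1) via the Laplace transform of the Gamma variable, and part~(2) by specializing Proposition~\ref{prop:distribution}(2) to the Gamma L\'evy measure (the paper also points to \cite{bib:AbergP} as an alternative reference). Your write-up is in fact more explicit than the paper's --- the Frullani computation and the remark on the compensator are correct and helpful --- though note a small slip: the density $\nu^{-1}x^{-1}e^{-x/\nu}$ you use (and which gives the stated formulas) does not quite match the parenthetical tail $E_1(\cdot)/\nu$ you cite from Section~\ref{sec:prob_framework}; the density is the right choice here, consistent with $\mathbb{E}(G(1))=1$.
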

\begin{proof}
1. Relation (\ref{eq:laplace_marg}) follows directly from (\ref{eq:1dim}) if we consider for $G$ a gamma process with scale $\nu$, shape $1/\nu$,  and use the expression for the Laplace transform  for a Gamma random variable.

2. The form of the characteristic function in (\ref{eq:laplace_fdd}) can be derived either from (\ref{eq:fdd}) for Gamma L\'evy measure $\Lambda$ or can be found in \cite{bib:AbergP}.
\end{proof}

\section{The time averages and ergodic properties}
The ergodicity of a signal is an important property as it allows to retrieve statistical characteristics of the model by applying time averages on a single realization. 
Probably the most standard model exhibiting ergodicity is a moving average that represent a filter of a noise.
This model can feature both arbitrary spectrum and desirable distributional properties, see \cite{bib:AbergP,bib:BaxevaniP}. 
On the other hand, if a model lacks ergodicity it may provide a framework for studying phenomena for which sample-to-sample stochastic variability is significant. 

In \cite{bib:Kay}, it was suggested that the sum of $m$ harmonics with random frequencies and with the amplitudes distributed according to a random variable $\xi$ have the  ergodicity in autocovariance property and the key argument required that $m$ is increasing without bound and $\mathbb{E}(\xi^4)/m$ converges to zero, 
see the middle of the first column at page 3450 and the bottom of the first column at page 3457 in the quoted work. 
If the amplitude distribution of $\xi$ does not depend on $m$ this would reduce the requirement to the existence of a finite fourth moment for $\xi$. 
However, typically the  distribution of $\xi$ does depend on $m$ or otherwise the distribution of $X(t)$ must depend on $m$ which is difficult to interpret when we assume that $m$ is changing. 
Existence of the limiting distribution of such process has to be examined and it may lead to undesirable distribution as seen next. 

We use our leading example of Laplace (double exponential) distribution to show that, the considered  amplitude has distribution of the form
$$
\xi\stackrel{d}{=}K \sqrt{m} R \sqrt{G_m},
$$
where $K>0$ is a numerical constant independent of $m$, $R$ is distributed as Rayleigh, and $G_m$ as gamma  with  shape parameter $a/m$, for some $a>0$ and scale 1/2. 
For this distribution, the fourth moment is proportional to $m^2\cdot a/m(a/m+1)=a^2+am$, and thus, clearly, $\mathbb{E}(\xi^4)/m$ converges to $a$, which is non-zero. Thus the argument for ergodicity in covariance fails. 

On the other hand if we keep the amplitude distribution to be independent of $m$, by considering for example
$$
\xi\stackrel{d}{=}K  R \sqrt{G_1},
$$
the limiting behavior of the distribution of the process $X_m$ becomes Gaussian. 
In order to formulate this in mathematical terms, the passage to the limit with $m$ should be formulate in terms of $m=N(L)$, where $N$ is a Poisson process.
Recall that conditionally on $N(L)=m$ we have 
\begin{equation}
\sigma_0\sum_{i=1}^{N(L)}\sqrt{\Lambda^{-1}(\Gamma_i)}R_i \cos(\lambda_it+\phi_i)=\sigma_0\sum_{i=1}^{N(L)}\sqrt{\Lambda^{-1}(L U_i)}R_i \cos(\lambda_it+\phi_i),
\end{equation}
where $U_i$ is uniform on $[0,1]$. 
Thus, the case of the amplitude distribution not depending on $m$ requires that  $\Lambda^{-1}(L U_i)$ is not depending on $L$, which can be obtained by replacing $\Lambda$ by 
$$\Lambda_L(\cdot)=L\cdot \Lambda(\cdot)
$$
and  consider instead the process
\begin{equation}
\label{eq:X_L}
X_L(t)=\sigma_0\sum_{i=1}^{\infty}\sqrt{\Lambda_L^{-1}(U_i)}R_i \cos(\lambda_it+\phi_i),
\end{equation}
with all random variables defined the same way as in the proof of Proposition~\ref{prop:spectral}.
Gaussianity of the limiting behavior of $X_L$ is then formulated as follows. 
\begin{Proposition}
When $L$ increases without bound, the process $X_L/\sqrt{L}$ in (\ref{eq:X_L}) converges in distribution to the Gaussian process  $X(t)$ having spectral representation
$$
X(t)=\int_{-\infty}^{\infty} e^{i\lambda t} d\zeta_F(\lambda),
$$
with $\zeta_F(0,\lambda]=\frac{\sqrt{2}}{2}\bigg(B(F(0,\lambda])+i\tilde{B}(F(0,\lambda]))\bigg)$, $\zeta_F[-\lambda,0)=\overline{\zeta_F(0,\lambda]}$ and $B, \tilde{B}$ and $F$ as in section~\ref{spre}.
\end{Proposition}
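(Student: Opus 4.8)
The plan is to prove convergence of the finite–dimensional distributions by comparing characteristic functions, which reduces the whole statement to a dominated–convergence estimate on a L\'evy--Khinchine exponent. The first step is to record the finite–dimensional characteristic function of $X_L$. The process $X_L$ of (\ref{eq:X_L}) is obtained from $\Lambda_L=L\Lambda$ by exactly the construction that produces $X$ from $\Lambda$ in Sections~\ref{spre}--\ref{sec:distrib}: since $\Lambda_L$ still satisfies (\ref{Levy}) --- only the normalisation (\ref{Levy_restriction}), used there solely to force $\E(G(1))=1$, is lost --- the process $X_L$ is a genuine second–order harmonizable process with spectral measure $F$ and subordinator of L\'evy measure $L\Lambda$. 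Running the computation in the proof of Proposition~\ref{prop:distribution}(2) verbatim with $\Lambda$ replaced by $L\Lambda$ gives, with $\mathbf A=\mathbf A(\lambda)$ as in Proposition~\ref{prop:distribution},
\[
\E\exp\Big(i\sum_{j=1}^n u_j X_L(t_j)\Big)=\exp\Big(L\int_0^\infty\!\!\int_0^\infty\Big(e^{-x\,\mathbf u\mathbf A\mathbf u^T/2}-1-x\tfrac{\mathbf u\mathbf A\mathbf u^T}{2}\,\mathbb I_{(0,1)}(x)\Big)\,d\Lambda(x)\,dF(\lambda)\Big).
\]
Replacing $u_j$ by $u_j/\sqrt L$ replaces $\mathbf u\mathbf A\mathbf u^T$ by $c(\lambda)/L$, where $c(\lambda):=\mathbf u\mathbf A(\lambda)\mathbf u^T=\big(\sum_j u_j\cos\lambda t_j\big)^2+\big(\sum_j u_j\sin\lambda t_j\big)^2\ge0$ is bounded uniformly in $\lambda$ by $\big(\sum_j|u_j|\big)^2$.

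Next I would let $L\to\infty$ inside the double integral by dominated convergence, treating $x\in(0,1)$ and $x\in[1,\infty)$ separately. On $\{x<1\}$ the integrand times $L$ equals $L\big(e^{-xc(\lambda)/(2L)}-1+xc(\lambda)/(2L)\big)$, which by $|e^{-y}-1+y|\le y^2/2$ for $y\ge0$ is dominated by $x^2\big(\sum_j|u_j|\big)^4/8$ --- integrable against $\Lambda\otimes F$ there because $\int_0^1 x^2\,\Lambda(dx)<\infty$ by (\ref{Levy}) and $F(0,\infty)<\infty$ --- and tends to $0$ pointwise. On $\{x\ge1\}$ it equals $L\big(e^{-xc(\lambda)/(2L)}-1\big)$, which by $|e^{-y}-1|\le y$ is dominated by $x\big(\sum_j|u_j|\big)^2/2$ --- integrable since $\int_1^\infty x\,\Lambda(dx)<\infty$ by (\ref{Levy}) --- and converges pointwise to $-xc(\lambda)/2$. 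Hence the exponent converges to
\[
-\frac12\int_0^\infty c(\lambda)\,dF(\lambda)\cdot\int_1^\infty x\,\Lambda(dx)=-\frac12\int_0^\infty c(\lambda)\,dF(\lambda),
\]
the first equality being (\ref{Levy_restriction}).

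It remains to recognise the limit: up to the normalisation fixed by (\ref{eq:spectral_meas}), it is the characteristic function of the Gaussian random vector $(X(t_1),\dots,X(t_n))$ for the spectral–representation process $X$ of the statement --- it is precisely the degenerate instance of the computation of Proposition~\ref{prop:distribution} in which the random subordinator is replaced by the identity clock $G(u)\equiv u$, so that the L\'evy measure disappears and only the Gaussian part survives, leaving the covariance $\int_0^\infty\cos(\lambda(t_j-t_k))\,dF(\lambda)$. By L\'evy's continuity theorem the finite–dimensional distributions of $X_L/\sqrt L$ therefore converge to those of the Gaussian process $X$, which is the asserted convergence in distribution; if weak convergence in a path space is wanted, one adds tightness, a routine consequence of the uniform–in–$L$ second–moment bounds and Kolmogorov's criterion.

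The step that genuinely needs care, beyond the bookkeeping, is the first one: checking that replacing $\Lambda$ by the unnormalised $L\Lambda$ is legitimate throughout Section~\ref{spre} --- so that (\ref{eq:X_L}) does have the spectral representation, and hence the characteristic function, written above --- since (\ref{Levy_restriction}) was part of the standing hypotheses; this is routine once one isolates where that normalisation was actually used. The remaining technical point is the uniformity in $\lambda$ of the domination, which is exactly where finiteness of $\int_0^\infty u^2\,\Lambda(du)$ in (\ref{Levy}) enters (for the $x<1$ part), while (\ref{Levy_restriction}) is what pins the variance of the Gaussian limit. A more probabilistic alternative would represent $X_L$ through a Brownian motion subordinated to the L\'evy process $G_L$ of L\'evy measure $L\Lambda$ and combine the functional law of large numbers $G_L(\cdot)/L\to\mathrm{id}$ with Brownian scaling $B(L\,\cdot)/\sqrt L\stackrel{d}{=}B(\cdot)$ to read off the Gaussian limit directly; I would avoid it because the stability of the Wiener integrals $\int\cos(\lambda t)\,dB(G_L(F(\lambda)))$ under the random time change is harder to justify than the elementary estimate above.
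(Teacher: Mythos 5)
Your proof is correct in substance but follows a genuinely different route from the paper's. The paper argues pathwise at the level of the spectral measure: writing $X_L/\sqrt L$ with subordinator $G_L(v)=G(Lv)/L$ (Brownian scaling absorbs the $\sqrt L$), it invokes the strong law for L\'evy subordinators, $G(Lv)/(Lv)\to 1$ a.s., to conclude $\zeta_{F,L}\to\zeta_F$, and then finishes by conditioning on $G$ and comparing second-order Gaussian processes. This is, almost verbatim, the ``more probabilistic alternative'' you describe and reject in your last sentence --- so the step you judged harder to justify (stability of the Wiener integrals under the random time change) is exactly the step the paper leaves implicit with ``from this it follows easily.'' Your characteristic-function route buys a fully explicit finite-dimensional argument in which one sees precisely where each hypothesis enters: (\ref{Levy}) supplies the dominating functions on $\{x<1\}$ and $\{x\ge 1\}$, and (\ref{Levy_restriction}) pins the limiting variance; the price is that you only get fdd convergence and must bolt on tightness for a functional statement, whereas the paper's approach (if completed) yields convergence of the spectral process itself. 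Two caveats, both minor and both flagged or inherited rather than created by you: first, you correctly note that Proposition~\ref{prop:distribution}(2) must be rerun with the unnormalised measure $L\Lambda$, which is legitimate since (\ref{Levy_restriction}) is used only to fix $\E(G(1))=1$; second, your limiting covariance $\int_0^\infty\cos(\lambda(t_j-t_k))\,dF(\lambda)$ reproduces the normalisation of the paper's displayed formula (\ref{eq:fdd}), which is itself off by a factor of $2$ relative to (\ref{eq:spectral_meas}) and to part (1) of that proposition (where $\operatorname{Var}X(0)=2F(0,\infty)$), so the identification of the limit with the stated Gaussian process should be read modulo that pre-existing inconsistency in the paper rather than as an error of yours.
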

\begin{proof}
It is quite clear that by taking the L\'evy measure $\Lambda_L=L\Lambda$, process $X_L/\sqrt{L}$ has the spectral representation of Section~\ref{spre} with the pure jump process
$$
G_L(v)=G(L\cdot v)/L,
$$
where $G$ is the corresponding jump process in the spectral representation of a process with the L\'evy measure $\Lambda$. 
Consequently, in the spectral measure of $X_L/\sqrt{L}$ given by
\begin{equation}
\zeta_{F,L}(0,\lambda]=\frac{\sqrt{2}}{2}\bigg(B(G_L(F(0,\lambda]))+i\tilde{B}(G_L(F(0,\lambda]))\bigg)
\end{equation}
the subordinator has the form
$$
G_L(F(0,\lambda])=\frac{G(L\cdot F(0,\lambda])}{L\cdot F(0,\lambda]} F(0,\lambda].
$$
For L\'evy subordinators  with finite moment, we have
$$
\lim_{L\rightarrow \infty} \frac{G(L\cdot F(0,\lambda])}{L\cdot F(0,\lambda]} =1.
$$
From this it follows easily that $\zeta_{F,L}$ converges to $\zeta_F$.
The conclusion then can be easily obtained for example, by conditioning on $G$ and establishing second order convergence of two Gaussian processes. 
\end{proof}
\begin{Remark}
From the above result it follows that in order to argue about the ergodicity for large $L$ ($m$) while requiring that the amplitude distribution does not depend on $L$ ($m$), one inevitably ends up with the process that asymptotically is Gaussian. 
\end{Remark}

In what follows, we show that the ergodicity property does not hold for processes of the form (\ref{eq:series_Poisson_2}) or equivalently (\ref{eq:Gamma_series_trunc2}). 
Despite this lack of ergodicity, the model can be still valuable for practical applications since we also establish the limiting distributions of the time averages expressed in terms of the amplitude distributions. 

We will use  \emph{ergodicity in autocorrelation}, which is a variant of ergodicity of a process $X(t)$ requiring that for a centered process 
 \begin{equation*}
     \label{eq:Birkoff}
 \lim_{T\mapsto \infty}\frac{1}{T}\int_0^T X(t)X(t+\tau)~dt=r_X(\tau).
 \end{equation*}

\begin{Proposition}
\label{prop:time_averages}
Consider  the conditioned process $(X(t)\mcond N(L)=m)$  in (\ref{eq:series_Poisson_2}) defined as finite sum of harmonics, i.e. 
$$
\tilde{X}(t)\stackrel{d}{=}\sum_{i=1}^m \xi_{i,L}\cos(\lambda_i t+\phi_i)
$$
with amplitudes, frequencies and phases being random and described earlier is Subsection~\ref{fsia}. 
Then
    \begin{align}
    \label{eq:sum_1}
    \lim_{T\mapsto \infty}\frac{1}{T}\int_0^{\infty} \tilde{X}(t)~dt&=0,\\
   \label{eq:sum_2}
       \lim_{T\mapsto \infty}\frac{1}{T}\int_0^{\infty} \tilde{X}(t)\tilde{X}(t+\tau)~dt&=\sum_{i=1}^m \frac{\xi_{i,L}^2}{2}\cos(\lambda_i\tau).
\end{align}
 \end{Proposition}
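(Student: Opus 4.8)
The plan is to argue pathwise: condition on the random vector $(\xi_{1,L},\dots,\xi_{m,L},\lambda_1,\dots,\lambda_m,\phi_1,\dots,\phi_m)$, establish the two limits for almost every fixed value of this vector, and then invoke Fubini to promote them to the (unconditional) almost sure statements. After conditioning, $t\mapsto\tilde{X}(t)=\sum_{i=1}^m\xi_{i,L}\cos(\lambda_i t+\phi_i)$ is a \emph{deterministic} trigonometric polynomial with only finitely many terms, so no interchange of the limits in $m$ and $T$ occurs --- this finiteness is exactly what makes the computation elementary, in contrast with the $L\to\infty$ ($m\to\infty$) discussion preceding the proposition. (The integrals in the statement are read as $\int_0^T$.)

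The single ingredient I would isolate is the elementary averaging identity: for fixed $\omega,c\in\mathbb{R}$,
$$\lim_{T\to\infty}\frac1T\int_0^T\cos(\omega t+c)\,dt=\begin{cases}\cos c,&\omega=0,\\[1mm]0,&\omega\neq 0,\end{cases}$$
because for $\omega\neq0$ the integral equals $(\sin(\omega T+c)-\sin c)/\omega$, whose absolute value is at most $2/(|\omega|\,T)$. Since $F$, and hence $F_\lambda$, has no atom at zero, we have $\lambda_i>0$ almost surely, so this identity applies to every $\omega=\lambda_i$ and, in part two, to every $\omega=\lambda_i+\lambda_j$ and every $\omega=\lambda_i-\lambda_j$.

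For (\ref{eq:sum_1}) I would simply write $\frac1T\int_0^T\tilde{X}(t)\,dt=\sum_{i=1}^m\xi_{i,L}\,\frac1T\int_0^T\cos(\lambda_i t+\phi_i)\,dt$ and pass to the limit term by term using the identity with $\omega=\lambda_i>0$; a finite sum of zero limits is zero. For (\ref{eq:sum_2}), expand
$$\tilde{X}(t)\tilde{X}(t+\tau)=\sum_{i=1}^m\sum_{j=1}^m\xi_{i,L}\xi_{j,L}\cos(\lambda_i t+\phi_i)\cos(\lambda_j t+\lambda_j\tau+\phi_j),$$
and apply $\cos a\cos b=\tfrac12\cos(a-b)+\tfrac12\cos(a+b)$ to each summand, which rewrites the $(i,j)$ term as a sum of two cosines with $t$-frequencies $\lambda_i-\lambda_j$ and $\lambda_i+\lambda_j$ and with $t$-independent phase shifts. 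Dividing by $T$ and letting $T\to\infty$: the sum-frequency terms have $\omega=\lambda_i+\lambda_j>0$ and vanish in the limit, while a difference-frequency term survives only when $\lambda_i=\lambda_j$, contributing $\tfrac12\xi_{i,L}\xi_{j,L}\cos(\phi_i-\phi_j-\lambda_j\tau)$.

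The one point requiring care --- and the main (mild) obstacle --- is deciding which pairs $(i,j)$ satisfy $\lambda_i=\lambda_j$. When $F_\lambda$ is non-atomic, the $\lambda_i$ are almost surely pairwise distinct, so $\lambda_i=\lambda_j$ forces $i=j$, and the surviving $i=j$ term is $\tfrac12\xi_{i,L}^2\cos(-\lambda_i\tau)=\tfrac12\xi_{i,L}^2\cos(\lambda_i\tau)$; summing over $i$ gives exactly the right-hand side of (\ref{eq:sum_2}). If $F_\lambda$ carries atoms, coincident frequencies produce additional cross terms, so one should either adopt the standing assumption that $F_\lambda$ is continuous or first group the harmonics sharing a common frequency (as in the discrete-spectrum discussion of Section~\ref{spre}); I would make this assumption explicit. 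Finally, since all of the above has been established for almost every value of the conditioning vector, Fubini's theorem upgrades it to convergence with probability one, which completes the proof.
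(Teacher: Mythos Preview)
Your proposal is correct and follows essentially the same approach as the paper: expand the finite sum, apply the product-to-sum identity $\cos a\cos b=\tfrac12\cos(a-b)+\tfrac12\cos(a+b)$, and use that $\tfrac1T\int_0^T\cos(\omega t+c)\,dt\to 0$ for $\omega\neq 0$ to kill all but the diagonal difference-frequency terms. You are in fact more careful than the paper in two respects---explicitly framing the computation as pathwise (conditioning then Fubini) and flagging the atomic-$F_\lambda$ case where $\lambda_i=\lambda_j$ for $i\neq j$ could occur with positive probability---whereas the paper simply asserts ``unless $i=j$'' without comment.
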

 \begin{proof}
The relation in (\ref{eq:sum_1}) is a direct consequence of the properties of integrals of trigonometric functions and the fact that 
 $$
 \lim_{T\mapsto \infty} \frac{\sin(T)}{T}=0
 $$
To prove (\ref{eq:sum_2}) notice that for  $\tau>0$,
 \begin{eqnarray*}
 \label{eq:proof_2}
  &\frac{1}{T}\int_0^{T} \tilde{X}(t)\tilde{X}(t+\tau)~dt=\\
  &\frac{1}{T}\sum_{i,j=1}^m \xi_{i,L}\xi_{j,L} \int_{0}^{T}\cos(\lambda_it+\phi_i)\cos(\lambda_{j}(t+\tau)+\phi_j)~dt=\\  
  &\frac{1}{T}\sum_{i,j=1}^m \frac{\xi_{i,L}\xi_{j,L}}{2} \bigg(\int_{0}^{T}\cos\big((\lambda_{i}+\lambda_{j})t+\lambda_j\tau +\phi_i+\phi_j\big)~dt + \\
  &+\int_{0}^{T}\cos\big((\lambda_{i}-\lambda_{j})t -\lambda_j\tau +\phi_i-\phi_j\big)~dt\bigg)
 \end{eqnarray*}
 Notice that the first integral in the last equation  is of the form
 $$
 \int_0^T \cos(at+b)~dt=\frac{1}{a}(\sin(a'T+b')-\sin(a'')),
 $$
 for some constants $a,b, a',b'$ and $a''$, so that divided by $T$ as $T$ increases without bound goes to zero. The same is true for the second integral also, unless $i=j$, giving
 $$
\lim_{T\mapsto \infty}\frac{1}{T}\sum_{i=1}^m \frac{\xi_{i,L}^2}{2}  \int_0^T \cos(\lambda_i\tau)~dt = \sum_{i=1}^m\frac{\xi_{i,L}^2}{2}\cos(\lambda_i\tau).
 $$

 \end{proof}

  Concluding we present  the main result.
  \begin{Theorem}
  The  process ${X}$ defined  in (\ref{process_series}) that in a harmonizable form is also given in Proposition~\ref{prop:spectral} is not ergodic in autocorrelation.
  Moreover, almost surely we have
  \begin{align*}
    \lim_{T\mapsto \infty}\frac{1}{T}\int_0^{\infty} {X}(t)~dt&=0,\\
       \lim_{T\mapsto \infty}\frac{1}{T}\int_0^{\infty} {X}(t){X}(t+\tau)~dt&=\sum_{i=1}^\infty 
       \frac{
       \Lambda^{-1}(\Gamma_i) R_i^2
       }{2}\cos(\lambda_i\tau).
\end{align*}
  \end{Theorem}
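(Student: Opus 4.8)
The plan is to deduce the infinite-sum statement from the finite-sum computation of Proposition~\ref{prop:time_averages} together with the almost sure approximation $X_L\to X$ as $L\to\infty$ from Subsection~\ref{fsia}, using Birkhoff's pointwise ergodic theorem for the one genuinely non-elementary step. Write $X=X_L+\Delta_L$, where $X_L(t)=\sigma_0\sum_{i\le N(L)}\sqrt{\Lambda^{-1}(\Gamma_i)}\,R_i\cos(\lambda_i t+\phi_i)$ is the finite Poisson truncation and $\Delta_L$ is the tail. First I would record that $\mathbb E\sum_{i\ge1}\xi_i^2=2\sigma_0^2\int_0^\infty u\,\Lambda(du)<\infty$ (using $\mathbb E R_i^2=2$ and that $G$ is a subordinator of finite mean, cf.\ (\ref{Levy})--(\ref{Levy_restriction})), so $\sum_i\xi_i^2<\infty$ a.s.; consequently, with $\xi_i^2=\sigma_0^2\Lambda^{-1}(\Gamma_i)R_i^2$, the series $R(\tau):=\sum_{i\ge1}\tfrac{1}{2}\xi_i^2\cos(\lambda_i\tau)$ converges absolutely a.s., uniformly in $\tau$, and its truncations $R_L(\tau):=\sum_{i\le N(L)}\tfrac{1}{2}\xi_i^2\cos(\lambda_i\tau)$ obey $\sup_\tau|R_L(\tau)-R(\tau)|\le\tfrac12\sum_{i>N(L)}\xi_i^2\to0$ a.s.

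For fixed $L$, Proposition~\ref{prop:time_averages} applied to $X_L$ --- which, conditionally on $N(L)$, is exactly the finite sum of harmonics treated there, with the convergence uniform in $\tau$ --- gives, as $T\to\infty$, $\tfrac1T\int_0^T X_L(t)\,dt\to0$ and $\tfrac1T\int_0^T X_L(t)X_L(t+\tau)\,dt\to R_L(\tau)$ a.s.; in particular $\tfrac1T\int_0^T X_L(t)^2\,dt\to R_L(0)\le R(0)<\infty$. To replace $X_L$ by $X$, I would expand $X(t)X(t+\tau)-X_L(t)X_L(t+\tau)=X_L(t)\Delta_L(t+\tau)+\Delta_L(t)X_L(t+\tau)+\Delta_L(t)\Delta_L(t+\tau)$ and use the Cauchy--Schwarz inequality with the stationarity of $\Delta_L$ to get
\[
\limsup_{T\to\infty}\Big|\tfrac1T\int_0^T\big(X(t)X(t+\tau)-X_L(t)X_L(t+\tau)\big)\,dt\Big|\le 2R(0)^{1/2}\mathcal D_L^{1/2}+\mathcal D_L,\qquad \mathcal D_L:=\limsup_{T\to\infty}\tfrac1T\int_0^T\Delta_L(t)^2\,dt,
\]
and likewise $\limsup_T|\tfrac1T\int_0^T\Delta_L(t)\,dt|\le\mathcal D_L^{1/2}$. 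By the triangle inequality and $R_L\to R$, both asserted limits follow once $\mathcal D_L\to0$ a.s.\ as $L\to\infty$.

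Proving $\mathcal D_L\to0$ is where Birkhoff's theorem is needed and is the main obstacle. Since $\Delta_L$ is a centred, stationary, mean-square-continuous process, $\tfrac1T\int_0^T\Delta_L(t)^2\,dt\to\mathbb E[\Delta_L(0)^2\mid\mathcal I_L]$ a.s., where $\mathcal I_L$ is the invariant $\sigma$-field of the shift flow of $\Delta_L$. Conditioning further on the amplitude--frequency configuration $\mathcal G=\sigma(\{\xi_i\},\{\lambda_i\})$, I would show $\mathbb E[\Delta_L(0)^2\mid\mathcal I_L]=\mathbb E[\Delta_L(0)^2\mid\mathcal G]=\tfrac12\sum_{i>N(L)}\xi_i^2$; the last equality is elementary (cross terms vanish by independence and $\mathbb E\cos\phi_i=0$), and the first amounts to identifying $\mathcal I_L$, up to null sets, with the part of $\mathcal G$ visible to $\Delta_L$. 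That identification is the crux: the configuration is recoverable from the path and is shift-invariant, while conversely, because $F_\lambda$ has no atoms the $\lambda_i$ are a.s.\ distinct, so conditionally on $\mathcal G$ the shift acts on the remaining i.i.d.\ uniform phases as a Kronecker flow on an infinite torus, which is ergodic with respect to Haar measure --- equivalently, each non-constant character appearing in $\Delta_L(0)^2$ is of the form $\phi_i\pm\phi_j$ with $\lambda_i\pm\lambda_j\ne0$, hence averages to zero along the orbit. Granting this, $\mathcal D_L=\tfrac12\sum_{i>N(L)}\xi_i^2\to0$ a.s., and the two displayed limits follow, the second being $R(\tau)=\sum_{i\ge1}\tfrac12\sigma_0^2\Lambda^{-1}(\Gamma_i)R_i^2\cos(\lambda_i\tau)$. (Alternatively, the same torus computation applied directly to $X$ rather than to the tail $\Delta_L$ identifies the Birkhoff limit $\mathbb E[X(0)X(\tau)\mid\mathcal I]$ as $\sum_i\tfrac12\xi_i^2\cos(\lambda_i\tau)$ in one stroke.)

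Finally, non-ergodicity in autocorrelation is immediate: ergodicity in autocorrelation would force $\tfrac1T\int_0^T X(t)X(t+\tau)\,dt$ to converge to the deterministic $r_X(\tau)=\mathbb E[X(0)X(\tau)]$, but the limit $R(\tau)$ is genuinely random --- at $\tau=0$ it equals $\tfrac{\sigma_0^2}{2}\sum_i\Lambda^{-1}(\Gamma_i)R_i^2$, which has strictly positive variance and is precisely the a.s.\ limit of the sample variance, i.e.\ the sum of squared jumps of the sample path (cf.\ \cite{bib:Wright}). The routine ingredients are the trigonometric identities behind Proposition~\ref{prop:time_averages}, the Cauchy--Schwarz estimates, and the treatment of shifted tail integrals via stationarity; the delicate part is the ergodic-theoretic identification of the invariant $\sigma$-field, which is also where the non-atomicity of $F_\lambda$ (a.s.\ distinct frequencies) is used --- the atomic case is the genuinely different discrete-spectrum situation of Section~\ref{spre}.
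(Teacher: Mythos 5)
Your proof is correct, and its skeleton is the same as the paper's: apply the finite-sum computation of Proposition~\ref{prop:time_averages} to the Poisson truncation $X_L$ and pass to the limit $L\to\infty$ using the a.s.\ approximation of $X$ by $X_L$ from Subsection~\ref{fsia}. The difference is that the paper's proof stops there, declaring the interchange of the limits in $T$ and $L$ to be ``standard arguments'' that are omitted, whereas you actually supply that interchange --- and this is genuinely the only non-trivial content of the theorem beyond Proposition~\ref{prop:time_averages}. Your mechanism (Cauchy--Schwarz to reduce everything to $\mathcal D_L=\limsup_T\tfrac1T\int_0^T\Delta_L(t)^2\,dt$, then Birkhoff plus identification of the invariant $\sigma$-field of the tail to get $\mathcal D_L=\tfrac12\sum_{i>N(L)}\xi_i^2\to0$ a.s.) is sound, and you correctly isolate the delicate point: conditionally on amplitudes and frequencies the shift is a Kronecker flow on the infinite torus, and the relevant characters are killed because the $\lambda_i$ are a.s.\ distinct (indeed a.s.\ rationally independent) when $F_\lambda$ is continuous. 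Your remark that the atomic case is genuinely different is well taken --- the stated formula (and already the proof of Proposition~\ref{prop:time_averages}) silently assumes $\lambda_i\neq\lambda_j$ for $i\neq j$, a hypothesis the paper never makes explicit. Two further points in your favour: your limit carries the factor $\sigma_0^2$, which is the correct constant given $\xi_i^2=\sigma_0^2\Lambda^{-1}(\Gamma_i)R_i^2$ and which the theorem as printed drops; and you actually verify that the limiting autocorrelation is non-degenerate (positive variance at $\tau=0$), which is needed to conclude non-ergodicity and which the paper asserts without checking.
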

  \begin{proof}
  This result follows from Proposition~\ref{prop:time_averages}  and convergence of $X_L$  introduced at the end of Subsection~\ref{fsia} to $X$.
  Standard arguments for the change of the order of the two limits, one in $L$ and the other in $T$ are omitted here. 
 \end{proof}
 
\section{Conclusions}
The important model of stationary stochastic signals arising from allowing for random frequencies in harmonics has been thoroughly studied by representing it as a harmonizable process with respect to non-Gaussian measure. It features  distributional properties decoupled from spectral properties, as the former are governed by the distribution of  amplitudes and the latter by the distribution of frequencies. One important property that is obtained from that approach is that the time averages do preserve randomness over the long time horizon. This can be viewed as important property in some applications. On the other hand it follows that if the ergodic property is required from the model for practical reasons, one has to resort to other models that feature arbitrary spectrum and non-Gaussian marginal distributions, such as non-Gaussian moving averages considered in \cite{bib:PodgorskiW}, \cite{bib:BaxevaniPW}. These models   also feature non-Gaussian tail distributions but they require some coupling (dependence) between different harmonics both in amplitudes and frequencies, see \cite{bib:BaxevaniP}. 
Despite that the asymptotics of the time averages cannot retrieve such properties of the model as the moments, autocovariance, and sample distributions, the ergodic theorem yields the stochastic form of the limiting values that allows for the model estimation and studies of its sample to sample properties.



%
\bibliographystyle{apalike}
\bibliography{RefHL}

\end{document}